\theoremstyle{plain}
\newtheorem{theorem}{Theorem}[section]
\newtheorem{lemma}[theorem]{Lemma}
\newtheorem{corollary}[theorem]{Corollary}
\newtheorem{proposition}[theorem]{Proposition}
\theoremstyle{definition}
\newtheorem{definition}[theorem]{Definition}
\newtheorem{problem}[theorem]{Problem}
\theoremstyle{remark}
\newtheorem{remark}[theorem]{Remark}
\title{TSP integrality gap via 2-edge-connected multisubgraph problem under coincident IP optima}  
\author{Toshiaki Yamanaka \thanks{Whiting School of Engineering, Johns Hopkins University, Baltimore, MD, USA. Email: \href{mailto:tyamana1@jhu.edu}{tyamana1@jhu.edu}}}
\date{November 21, 2025}  
\begin{document}  
\maketitle  

\begin{abstract}
Determining the integrality gap of the linear programming (LP) relaxation of the metric traveling salesman problem (TSP) remains a long-standing open problem. We introduce a transfer principle: when the integer optimum of the 2-edge-connected multisubgraph problem (2ECM) is a unique Hamiltonian cycle $T$, any $\alpha$-approximation algorithm for 2ECM that outputs a Hamiltonian cycle yields an $\alpha$-approximation for TSP. We further develop a cut-margin stability framework that certifies $T$ as the unique integer optimum for both problems and is stable under $\ell_\infty$-bounded perturbations. We show that, if instances exist where the 2ECM has both a unique Hamiltonian cycle integer optimum and a half-integral LP solution, then the TSP integrality gap is at most $\tfrac{4}{3}$ by the algorithm of Boyd et al. (\emph{SIAM Journal on Discrete Mathematics} 36:1730--1747, 2022). Constructing such instances remains an open problem.
\end{abstract}

\noindent\textbf{Keywords}: TSP, 2ECM, integrality gap, half-integral instance

\section{Introduction}
\subsection{Preliminaries: TSP}
In the traveling salesman problem (TSP), we are given a set $V$ of $n$ cities and costs $c_{ij}$ of traveling from city $i$ to city $j$ for all $i,j\in V$. The goal is to find a minimum-cost Hamiltonian cycle: a tour that visits each city exactly once and returns to the starting city. The TSP is called \emph{symmetric} if $c_{ij}=c_{ji}$ for all $i,j$, and \emph{metric} if the costs satisfy the triangle inequality, that is, $c_{ij}\le c_{ik}+c_{kj}\hspace{0.3em}\text{for all }i,j,k\in V$. We study the symmetric metric TSP, where costs $c_{ij}$ are nonnegative.

The linear programming (LP) relaxation of the TSP is formulated as follows (Dantzig, Fulkerson, and Johnson~\cite{DantzigFulkersonJohnson1954}, \emph{e.g.}, Jin, Klein, and Williamson~\cite{JinKleinWilliamson2025}).

\begin{align}
\shortintertext{\text{(TSP-LP)}}
\text{min}\quad & \sum_{e \in E} c_e\,x_e \label{TSP-LP1}\\[4pt]  
\text{s.t.}\quad   
& x\bigl(\delta(v)\bigr) \;=\; 2  && \forall\,v \in V \\  
& x\bigl(\delta(S)\bigr) \;\ge\; 2 && \forall\,S \subset V,\; S \neq \emptyset,\; S \neq V \\  
& 0 \;\le\; x_e \;\le\; 1        && \forall\,e \in E \label{TSP-LP4}
\end{align}  

\noindent where $\delta(S)$ denotes the set of all edges with exactly one endpoint in $S$. We refer to this formulation as the TSP-LP. A \emph{half-integral solution} to the TSP-LP is one such that $x_e \in \bigl\{0,\tfrac12,1\bigr\}\hspace{0.25em}\text{for all }e\in E$, and a \emph{half-integral instance} is one in which there is a half-integral optimal solution to the LP. The \emph{integrality gap} of an LP relaxation is the largest (worst-case) ratio, over all instances, between the optimal cost of an integer solution and that of a fractional LP relaxation solution.

\subsection{Preliminaries: 2ECM}
A multigraph $M$ is \emph{2-edge-connected} (2EC) if removing any single edge leaves it connected. The 2ECM is the problem of finding a minimum-cost 2EC spanning multisubgraph of a connected undirected graph $G$.

An integer programming (IP) formulation of the 2EC multisubgraph problem (2ECM), denoted by 2ECM-IP, is given as follows (\emph{e.g.}, Boyd et al.~\cite{Boyd2022}).

\begin{align}
\shortintertext{\text{(2ECM-IP)}}
\text{min}\quad & \sum_{e \in E} c_e\,x_e \\[4pt]  
\text{s.t.}\quad
& x\bigl(\delta(S)\bigr) \;\ge\; 2 && \forall\,S \subset V,\; S \neq \emptyset,\; S \neq V \label{diff}\\
& x_e \;\ge\; 0 && \forall\,e \in E \\
& x_e  \in \mathbb{Z} && \forall\,e \in E \label{int}
\end{align}

For this 2ECM-IP, the only difference from the IP formulation of the TSP (TSP-IP) is that, in place of $x\bigl(\delta(S)\bigr) \ge\ 2$ in Eq.~\eqref{diff}, we have $ x\bigl(\delta(S)\bigr) = 2$ in the TSP-IP. The LP relaxation of the 2ECM (2ECM-LP) is obtained by adding a constraint $ x\bigl(\delta(v)\bigr) =2 \quad \forall\ v \in V $ to the 2ECM-IP and removing the integer constraint Eq.~\eqref{int}.

\subsection{Insights from prior literature}\label{sect:insights}
The $\tfrac{3}{2}$-approximation algorithm proposed by Christofides~\cite{Christofides1976} in 1976 remained the best approximation algorithm for the general TSP for approximately 50 years. Karlin, Klein, and Oveis Gharan~\cite{KarlinKleinOveisGharan2024} recently obtained an approximation ratio strictly better than $\tfrac{3}{2}$ for the metric TSP, although the improvement was slight (approximately $\epsilon>10^{-36}$). Significant progress has been made in various special cases (see Traub and Vygen~\cite{TraubVygen2024}). Nevertheless, the exact integrality gap of the metric TSP remains a central open problem. It is conjectured to be $\tfrac{4}{3}$ (\emph{e.g.}, Williamson~\cite{Williamson1990}, Jin, Klein, and Williamson~\cite{JinKleinWilliamson2025}), meaning the worst-case ratio between IP and LP optimal values is believed to be exactly $\tfrac{4}{3}$.

Half-integral instances have been conjectured to yield the worst-case TSP integrality gap (see Jin, Klein, and Williamson~\cite{JinKleinWilliamson2025}). For the half-integral case, Karlin, Klein, and Oveis Gharan~\cite{KarlinKleinOveisGharan2020} showed that the integrality gap is at most $1.49993$, later improved to $1.4983$ by Gupta et al.~\cite{Gupta2024}. Thus, the prior literature considers the integrality gap for half-integral instances of the TSP to be at most $1.4983$. Recently, a $\tfrac{4}{3}$-approximation algorithm was developed for half-integral cycle-cut instances (Jin, Klein, and Williamson~\cite{JinKleinWilliamson2025}).

As pointed out by Boyd et al.~\cite{Boyd2022}, by the result of Goemans and Bertsimas~\cite{GoemansBertsimas1993} called the \emph{parsimonious property}, adding the constraint $ x\bigl(\delta(v)\bigr) =2 \quad \forall\ v \in V $ does not increase the optimal solution value of 2ECM-LP. The optimal value of 2ECM-LP is the same as the optimal value of the TSP-LP defined in Eqs.~\eqref{TSP-LP1} to~\eqref{TSP-LP4}.

Advancing the work of Carr and Ravi~\cite{CarrRavi1998}, Boyd et al.~\cite{Boyd2022} obtained a polynomial-time $\tfrac{4}{3}$-approximation algorithm for 2ECM on half-integral instances. Specifically, for an undirected graph $\overline{G}=(\overline{V},\overline{E})$ with nonnegative edge costs $c$, let $x$ denote a half-integral solution to an instance $(\overline{G}, c)$ of 2ECM-LP (and TSP-LP). There is an $O(|V(\overline{G})|^2)$-time algorithm for computing a 2EC spanning multisubgraph of $\overline{G}$ with cost at most $\tfrac{4}{3}c^Tx$ (Boyd et al.~\cite{Boyd2022}, Theorem 1).

\subsection{Motivation and outline}
We observe that, when the 2ECM-IP optimum is a unique Hamiltonian cycle, the TSP and 2ECM become algorithmically equivalent, enabling direct transfer of approximation guarantees from 2ECM to TSP. This insight, formalized as our transfer principle, provides a framework for understanding when 2ECM techniques apply to TSP.

Our main result is in \Cref{sect:main}. In \Cref{sect:transfer}, we introduce the transfer principle: when the integer optimum of 2ECM is a unique Hamiltonian cycle $T$, any $\alpha$-approximation algorithm for 2ECM that outputs a Hamiltonian cycle yields an $\alpha$-approximation for TSP. In \Cref{sect:general}, we develop a cut-margin stability framework that certifies $T$ as the unique integer optimum for both problems and is stable under $\ell_\infty$-bounded perturbations. \Cref{sect:open} provides an open problem, and \Cref{sect:conclusion} concludes the paper.

\section{Main result}\label{sect:main}
\subsection{Transfer principle}\label{sect:transfer}
For any problem $X \in \{\mathrm{TSP\text{-}IP}, \mathrm{TSP\text{-}LP}, \mathrm{2ECM\text{-}IP}, \mathrm{2ECM\text{-}LP}\}$ on the given instance, we write $\mathrm{OPT}_{X}$ for the optimal objective value of $X$. $c(T)$ denotes the cost of a Hamiltonian cycle $T$ under $c$.

\begin{proposition}[transfer principle for 2ECM and TSP]\label{prop:transfer}
Let $G$ be a complete graph with metric costs $c$. If the unique optimal solution to 2ECM-IP is a Hamiltonian cycle $T$, then:
\begin{enumerate}
    \item The integral optimal solutions and optimal values coincide between 2ECM and TSP. $T$ is the unique optimum for both.
    \item Any $\alpha$-approximation algorithm for 2ECM that outputs a Hamiltonian cycle yields an $\alpha$-approximation for TSP.
    \item The integrality gap of TSP-LP on this instance equals that of 2ECM-LP:
\[\mathrm{OPT}_{\mathrm{TSP-LP}} \;=\; \mathrm{OPT}_{\mathrm{2ECM-LP}}\quad\text{and}\quad\frac{c(T)}{\mathrm{OPT}_{\mathrm{TSP-LP}}}\;=\;\frac{c(T)}{\mathrm{OPT}_{\mathrm{2ECM-LP}}}.\]
\end{enumerate}
\end{proposition}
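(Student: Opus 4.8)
The plan is to reduce all three claims to a single structural fact: the feasible region of TSP-IP is contained in that of 2ECM-IP. Every Hamiltonian cycle is a 2-edge-connected spanning subgraph, so its incidence vector satisfies $x(\delta(S))\ge 2$ for every proper nonempty $S$ together with $x\ge 0$ and integrality; hence $\mathrm{Feas}(\mathrm{TSP\text{-}IP})\subseteq\mathrm{Feas}(\mathrm{2ECM\text{-}IP})$. Minimizing the common objective $c^{\top}x$ over the larger set yields $\mathrm{OPT}_{\mathrm{2ECM\text{-}IP}}\le\mathrm{OPT}_{\mathrm{TSP\text{-}IP}}$ for free. This is the only inequality that uses the inclusion, and getting its direction right, namely that 2ECM is the integer relaxation of TSP, is the one place to be careful at the outset.

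For claim~1, I would combine this inequality with the hypothesis. Since the unique 2ECM-IP optimum is the Hamiltonian cycle $T$, we have $c(T)=\mathrm{OPT}_{\mathrm{2ECM\text{-}IP}}$, and because $T$ is itself a feasible TSP tour, $\mathrm{OPT}_{\mathrm{TSP\text{-}IP}}\le c(T)$. The sandwich $c(T)=\mathrm{OPT}_{\mathrm{2ECM\text{-}IP}}\le\mathrm{OPT}_{\mathrm{TSP\text{-}IP}}\le c(T)$ forces equality of optimal values and shows $T$ attains the TSP optimum. Uniqueness for TSP then follows directly: any TSP optimum $T'$ satisfies $c(T')=\mathrm{OPT}_{\mathrm{TSP\text{-}IP}}=\mathrm{OPT}_{\mathrm{2ECM\text{-}IP}}$ and is 2ECM-feasible by the inclusion, hence is itself a 2ECM-IP optimum; uniqueness of the latter gives $T'=T$. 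This uniqueness transfer, upgrading \emph{equal optimal value} to \emph{same optimal solution}, is the most delicate step, and it is precisely where the hypothesis of a unique 2ECM optimum is spent.

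Claims~2 and~3 are then short. For claim~2, an $\alpha$-approximation for 2ECM that returns a Hamiltonian cycle $H$ produces a feasible TSP tour with $c(H)\le\alpha\,\mathrm{OPT}_{\mathrm{2ECM\text{-}IP}}=\alpha\,\mathrm{OPT}_{\mathrm{TSP\text{-}IP}}$ by claim~1, so $H$ is an $\alpha$-approximate TSP solution; the same argument works verbatim against the LP value, which is the version relevant to the integrality-gap application. For claim~3, the equality $\mathrm{OPT}_{\mathrm{TSP\text{-}LP}}=\mathrm{OPT}_{\mathrm{2ECM\text{-}LP}}$ is the parsimonious property of Goemans and Bertsimas already recalled in \Cref{sect:insights}, so I would simply invoke it. The two per-instance gap ratios share the numerator $c(T)$, the common IP optimum from claim~1, and the common denominator supplied by the parsimonious property, hence they are equal, which completes the argument.
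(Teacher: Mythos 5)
Your proposal is correct and follows essentially the same route as the paper: the feasible-set inclusion $\mathrm{Feas}(\mathrm{TSP\text{-}IP})\subseteq\mathrm{Feas}(\mathrm{2ECM\text{-}IP})$ gives the sandwich forcing $T$ to be optimal for both, uniqueness transfers because any TSP optimum is a 2ECM optimum, and part~(3) is the parsimonious property. Your write-up is somewhat more explicit about the direction of the inequality and the uniqueness transfer than the paper's proof, but the argument is the same.
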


\begin{proof}
(1) The feasible set of TSP-IP (Hamiltonian cycles) is contained in that of 2ECM-IP. Since $T$ is a Hamiltonian cycle and the unique 2ECM-IP optimum, it is also feasible and optimal for TSP-IP. Uniqueness follows because any other Hamiltonian cycle of the same cost would contradict 2ECM-IP uniqueness.

(2) If an algorithm returns a Hamiltonian cycle $T$ with $c(T)\le \alpha\cdot \mathrm{OPT}_{\mathrm{2ECM-IP}}$, then $\mathrm{OPT}_{\mathrm{2ECM-IP}}\\=\mathrm{OPT}_{\mathrm{TSP-IP}}=c(T)$ by (1), hence $c(T)\le \alpha\cdot \mathrm{OPT}_{\mathrm{TSP-IP}}$.

(3) By the parsimonious property (\Cref{sect:insights}), $\mathrm{OPT}_{\mathrm{TSP-LP}}=\mathrm{OPT}_{\mathrm{2ECM-LP}}$. With the common numerator $c(T)$ from (1), the claimed equality follows.
\end{proof}

\begin{remark}[algorithms producing multisubgraphs]
The transfer principle requires that the 2ECM approximation algorithm outputs a Hamiltonian cycle for the guarantee to transfer directly to TSP. Although 2ECM algorithms may produce 2EC multisubgraphs, they still provide important bounds on the TSP integrality gap. Specifically, a) if an algorithm achieves an $\alpha$-approximation for 2ECM and the optimal 2ECM-IP solution is a Hamiltonian cycle with cost equal to $\mathrm{OPT}_{\mathrm{TSP-IP}}$, then the TSP integrality gap is at most $\alpha$, and b) the algorithm demonstrates that solutions of cost at most $\alpha \cdot \mathrm{OPT}_{\mathrm{2ECM-LP}}=\alpha \cdot \mathrm{OPT}_{\mathrm{TSP-LP}}$ exist, establishing the integrality gap bound.
\end{remark}

Our transfer principle (Proposition~\ref{prop:transfer}) and cut-margin stability (\Cref{theo:cut_margin}) are stated for instances whose $\mathrm{2ECM\text{-}IP}$ optimum is a unique Hamiltonian cycle. For completeness, we also record a value-level transfer that does not require uniqueness (Proposition~\ref{prop:transfer-nonunique}).

\begin{proposition}[transfer without uniqueness]\label{prop:transfer-nonunique}
Suppose $G$ is a complete graph with metric costs $c$. If there exists an optimal solution to $\mathrm{2ECM\text{-}IP}$ that is a Hamiltonian cycle (not necessarily unique), then:
\begin{enumerate}
    \item $\mathrm{OPT}_{\mathrm{TSP-IP}} \;=\; \mathrm{OPT}_{\mathrm{2ECM-IP}}$.
    \item For any $\alpha \ge 1$, any $\alpha$-approximation algorithm for $\mathrm{2ECM}$ that outputs a Hamiltonian cycle yields an $\alpha$-approximation for $\mathrm{TSP}$.
\end{enumerate}
\end{proposition}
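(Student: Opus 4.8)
The plan is to establish part (1) by a two-sided inequality argument that exploits the containment of feasible regions, and then obtain part (2) as an immediate consequence of the value equality from part (1) --- crucially, \emph{without} invoking any uniqueness of optima. The whole argument mirrors the value-level content of \Cref{prop:transfer}, but replaces the uniqueness-based reasoning with the weaker hypothesis that merely \emph{some} optimal 2ECM-IP solution is a Hamiltonian cycle.

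For part (1), I would first record the inclusion of feasible sets. Every Hamiltonian cycle, viewed as its $\{0,1\}$ incidence vector, is feasible for 2ECM-IP: it is integral and nonnegative, and for every proper nonempty $S \subset V$ a single cycle crosses the cut $\delta(S)$ an even number of times, hence at least twice, so $x\bigl(\delta(S)\bigr) \ge 2$. Thus the TSP-IP feasible region (Hamiltonian cycles) is contained in the 2ECM-IP feasible region, and minimizing over the larger set yields
\[
\mathrm{OPT}_{\mathrm{2ECM-IP}} \;\le\; \mathrm{OPT}_{\mathrm{TSP-IP}}.
\]
For the reverse inequality, let $T$ be an optimal 2ECM-IP solution that is a Hamiltonian cycle, as guaranteed by hypothesis, so that $c(T) = \mathrm{OPT}_{\mathrm{2ECM-IP}}$. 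Since $T$ is itself a feasible TSP-IP solution, $\mathrm{OPT}_{\mathrm{TSP-IP}} \le c(T) = \mathrm{OPT}_{\mathrm{2ECM-IP}}$. Combining the two inequalities gives the claimed equality.

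For part (2), let $\mathcal{A}$ be an $\alpha$-approximation algorithm for 2ECM that returns a Hamiltonian cycle $H$, so that $c(H) \le \alpha \cdot \mathrm{OPT}_{\mathrm{2ECM-IP}}$. By part (1) this reads $c(H) \le \alpha \cdot \mathrm{OPT}_{\mathrm{TSP-IP}}$, and since $H$ is a Hamiltonian cycle it is a feasible TSP solution; hence $\mathcal{A}$ is an $\alpha$-approximation for TSP. If instead the guarantee of $\mathcal{A}$ were stated relative to the LP value, the conclusion would only strengthen, since $\mathrm{OPT}_{\mathrm{2ECM-LP}} = \mathrm{OPT}_{\mathrm{TSP-LP}} \le \mathrm{OPT}_{\mathrm{TSP-IP}}$ by the parsimonious property.

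The proof presents no real obstacle; its only substantive content is the observation that the approximation transfer requires \emph{value} equality alone, not uniqueness of the optimizer. The one point requiring care is precisely what is lost relative to \Cref{prop:transfer}: upon dropping uniqueness we can no longer assert that $T$ is \emph{the} optimum for either problem, so I would deliberately refrain from claiming the solution-level or integrality-gap identities of \Cref{prop:transfer}(1) and (3), and state only the optimal-value equality and the resulting approximation guarantee, which is all the weakened hypothesis supports.
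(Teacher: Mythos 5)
Your proof is correct and takes essentially the same route as the paper's: the two-sided inequality from feasible-region containment plus the Hamiltonian 2ECM-IP optimum being TSP-feasible is exactly the paper's (more tersely stated) argument for (1), and (2) follows identically from the value equality. Your additional remarks on why uniqueness is not needed and on LP-relative guarantees are sound elaborations, not deviations.
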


\begin{proof}
(1) Since a Hamiltonian cycle attains the $\mathrm{2ECM\text{-}IP}$ optimum and is feasible for $\mathrm{TSP\text{-}IP}$, the two optimal values coincide. (2) If the algorithm outputs a Hamiltonian cycle $T$, then $c(T) \le \alpha \cdot \mathrm{OPT}_{\mathrm{2ECM\text{-}IP}} = \alpha \cdot \mathrm{OPT}_{\mathrm{TSP\text{-}IP}}$.
\end{proof}

While Proposition ~\ref{prop:transfer-nonunique} shows value-level equivalence without uniqueness, the uniqueness condition in Proposition~\ref{prop:transfer} ensures that the same solution is optimal for both TSP-IP and 2ECM-IP. We focus on algorithms that output Hamiltonian cycles, in which case the approximation guarantee transfers to TSP with the same factor $\alpha$. In particular, we study 2ECM algorithms on instances where the unique 2ECM-IP optimum is a Hamiltonian cycle $T$.

\subsection{General cut-margin stability and LP relaxation gap}\label{sect:general}
The stability of TSP instances has been studied from multiple perspectives. Böckenhauer et al.~\cite{Bockenhauer2002} introduced the notion of approximation stability. Our work utilizes the notion of structural stability or solution uniqueness under perturbation, in the spirit of Bilu and Linial~\cite{BiluLinial2012}.

\begin{definition}[laminar family]
A family $\mathcal{C}$ of subsets of $V$ is \emph{laminar} if for any two sets $S, T \in \mathcal{C}$, either $S \subseteq T$, $T \subseteq S$, or $S \cap T = \emptyset$. In other words, no two sets in $\mathcal{C}$ cross. For laminar families and uncrossing, see Gr\"otschel, Lov\'asz, and Schrijver~\cite{GLS1993} and Williamson and Shmoys~\cite{WilliamsonShmoys2011} (Section 11.2).
\end{definition}

\begin{definition}[cut margin]
For a cut $\delta(S)$ in a graph with edge costs $c$ and a specified edge set $T$, the \emph{cut margin} is
\[\min_{f \in \delta(S) \setminus T} c(f) - \max_{e \in \delta(S) \cap T} c(e).\]
A positive cut margin means all $T$-edges crossing the cut are strictly cheaper than all non-$T$ edges crossing the cut.
\end{definition}

\begin{theorem}[cut-margin stability]\label{theo:cut_margin}
Let $T$ be a Hamiltonian cycle on $V$, and let $\mathcal{C}$ be a laminar family of nonempty proper cuts in the complete graph (metric completion) $K_n$ on $V$ with metric costs $c$. Assume the following:
\begin{enumerate}
    \item For every edge $f \in E(K_n) \setminus E(T)$, there exists $S \in \mathcal{C}$ such that $f \in \delta(S)$.
    \item There exists $\varepsilon>0$ such that for every $S\in\mathcal{C}$,
  \[\max_{e \in \delta(S)\cap E(T)} c(e) \, + \, \varepsilon \, \le \, \min_{f \in \delta(S)\setminus E(T)} c(f).\]
\end{enumerate}
Then:
\begin{enumerate}
    \item $T$ is the unique optimal solution of both 2ECM-IP and TSP-IP under $c$.
    \item This uniqueness is stable under bounded perturbations of $c$ in $\ell_\infty$: if $\|c-c'\|_\infty < \varepsilon/2$, the same conclusion holds for $c'$.
\end{enumerate}
\end{theorem}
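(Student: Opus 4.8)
The plan is to route the entire statement through a single claim: \emph{$\chi_T$, the indicator vector of the tour edges, is the unique optimum of 2ECM-IP under $c$}. Granting this, part~1 for TSP-IP needs no extra work, since the hypothesis of \Cref{prop:transfer} (that the unique 2ECM-IP optimum is a Hamiltonian cycle) is then met verbatim, and \Cref{prop:transfer}(1) delivers the coincidence of integral optima and uniqueness for TSP-IP. Part~2 will follow by verifying that the covering and gap hypotheses persist for $c'$ and re-invoking part~1. It is worth stressing that integrality must be used essentially here: under these hypotheses the fractional optimum is generically \emph{strictly} below $c(T)$ (that gap is the entire point of the transfer principle), so no purely LP-dual certificate of value $c(T)$ can exist.

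To establish the claim I would take an arbitrary integral feasible $x$ (so $x\in\mathbb Z_{\ge 0}^{E(K_n)}$ with $x(\delta(S))\ge 2$ for all proper nonempty $S$) and bound
\[
c^{\top}x-c(T)\;=\;\sum_{e\in E(T)}c_e\,(x_e-1)\;+\;\sum_{f\in E(K_n)\setminus E(T)}c_f\,x_f,
\]
isolating the two ways $x$ can differ from $\chi_T$: (a)~extra use of non-tour edges, and (b)~omission of tour edges. For (a), suppose $x_f\ge 1$ for some non-tour $f$; the covering hypothesis supplies a cut $S\in\mathcal C$ with $f\in\delta(S)$, and the gap hypothesis makes every tour edge of $\delta(S)$ cheaper than $f$ by at least $\varepsilon$. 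I would then perform a rerouting exchange—lowering $x_f$ and raising the multiplicity of a tour edge of $\delta(S)$—that preserves $x(\delta(S))\ge 2$ and 2-edge-connectivity while strictly decreasing the cost, iterating until the support lies in $E(T)$. For (b), a feasible solution supported on $E(T)$ that drops a tour edge $e^{\ast}$ is forced by the singleton/arc cuts along the path $T-e^{\ast}$ to take every surviving tour edge with multiplicity at least two—the ``doubled path,'' of cost $2\bigl(c(T)-c_{e^{\ast}}\bigr)$—and metricity gives $c_{e^{\ast}}\le c(T)-c_{e^{\ast}}$, hence cost at least $c(T)$.

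I expect the difficulty to be concentrated in two places. First, the rerouting exchange in (a) must not destroy 2-edge-connectivity, the usual sticking point in 2ECM arguments; I would choose $S$ minimal in $\mathcal C$ among cuts containing $f$ and reroute along the tour edges of $\delta(S)$, leaning on laminarity to keep the modification local and the induction well-founded. Second, and more delicate, in (b) metricity alone yields only the \emph{non-strict} bound $c_{e^{\ast}}\le c(T)/2$, so a doubled path could a priori tie with $\chi_T$; excluding this is exactly where the two hypotheses must be used together rather than separately. The mechanism I would formalize is that a tie requires a tour edge $e^{\ast}$ of cost $c(T)/2$, which is incompatible with the hypotheses: covering the non-tour edges incident to the endpoints of $e^{\ast}$ (covering hypothesis) forces cuts of $\mathcal C$ whose crossing tour edges include one as expensive as $e^{\ast}$, contradicting the gap hypothesis together with $c_{e^{\ast}}\le c(T)/2$. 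Turning this into a clean argument, uniform over where $e^{\ast}$ sits on the cycle, is the crux of the proof.

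Finally, for part~2 I would argue stability directly. The covering hypothesis is combinatorial and survives replacing $c$ by $c'$ untouched. For the gap hypothesis, if $\|c-c'\|_\infty<\varepsilon/2$ then on each $S\in\mathcal C$ the dearest crossing tour edge increases by less than $\varepsilon/2$ and the cheapest crossing non-tour edge decreases by less than $\varepsilon/2$, so the margin stays strictly positive; since $\mathcal C$ and $E(K_n)$ are finite, the minimum of these margins is a uniform $\varepsilon'>0$ certifying the gap hypothesis for $c'$. Applying the already-proved part~1 to $(K_n,c',T,\mathcal C,\varepsilon')$ then yields the same conclusion for $c'$. The one point requiring care is that $c'$ need not be metric, whereas case~(b) used the triangle inequality; I would address this either by restricting to perturbations that preserve the relevant path inequalities, or by checking that under the gap hypothesis every tour edge is already dominated on a covering cut, so that the ``doubled path'' bound can be recovered from hypotheses~(1)--(2) alone without invoking metricity of $c'$.
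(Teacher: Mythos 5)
Your overall strategy mirrors the paper's: an exchange argument that uses coverage plus the cut margin to remove non-tour edges from a putative optimum, followed by a separate analysis of solutions supported on $E(T)$. The structural differences are that you route the TSP-IP conclusion through \Cref{prop:transfer} rather than handling Hamiltonian cycles directly via alternating cycles, and that you explicitly isolate the ``doubled path'' case, which the paper's proof silently skips. However, both steps you flag as delicate are genuine, unfilled gaps, and neither can be closed from hypotheses (1)--(2) as stated. In case (a) you assert that whenever a non-tour edge $f$ is used one can lower $x_f$, raise the multiplicity of a tour edge of $\delta(S)$, preserve $2$-edge-connectivity, and strictly decrease cost; no such exchange is constructed, and one need not exist. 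In case (b) you concede that excluding the tie $c_{e^{\ast}}=c(T)/2$ is ``the crux'' and offer only a sketch (coverage of the non-tour edges at the endpoints of $e^{\ast}$ ``forces cuts whose crossing tour edges include one as expensive as $e^{\ast}$''); this does not follow, since the covering cut for such an $f$ need not have $e^{\ast}$ among its crossing tour edges.

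Both gaps are in fact fatal: the hypotheses do not imply the conclusion. Take $V=\{v_0,v_1,v_2,v_3\}$ at positions $0,1,2,3$ on a line with $c(v_i,v_j)=|i-j|$ (a metric), let $T=v_0v_1v_2v_3v_0$ (cost $6$), and let $\mathcal{C}=\{\{v_1,v_2\}\}$. The cut $\delta(\{v_1,v_2\})$ contains both non-tour edges $v_0v_2,v_1v_3$ (so coverage holds), its tour edges $v_0v_1,v_2v_3$ cost $1$, and its non-tour edges cost $2$, so the margin holds with $\varepsilon=1$. Yet $T'=v_0v_1v_3v_2v_0$ is a second Hamiltonian cycle of cost $6$, and the doubled path $2(T-v_0v_3)$ also costs $6$, so $T$ is not the unique optimum of either IP. Since $T'$ is itself optimal, no feasibility-preserving, cost-decreasing reroute of the kind your case (a) requires can exist at $\chi_{T'}$; and the tie your case (b) must exclude is realized while (1)--(2) hold, so the contradiction you hope to derive from them is unobtainable. (The same instance defeats the paper's own argument: its alternating-cycle swap controls only one edge pair per covering cut, and here exchanges edges of total cost $4$ for edges of total cost $4$.) Any repair must strengthen the hypotheses so that a non-tour edge is dominated by \emph{every} tour edge it could replace, not merely by the tour edges crossing one covering cut.
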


\begin{proof}
\textbf{Uniqueness.} Let $H^*$ be any optimal solution: a 2EC spanning multigraph for 2ECM-IP or a Hamiltonian cycle for TSP-IP. 

We claim $H^* \subseteq E(T)$. Suppose for contradiction that $H^* \not\subseteq E(T)$. By the coverage assumption (1), there exists $S \in \mathcal{C}$ such that $H^*$ uses an edge $f \in \delta(S) \setminus E(T)$. Since $T$ is a Hamiltonian cycle, exactly two edges of $T$ cross $\delta(S)$, so at least one $T$-edge $e \in \delta(S)$ is missing from $H^*$. By assumption (2),
\[c(e) + \varepsilon \le \max_{e' \in \delta(S)\cap E(T)} c(e') + \varepsilon \le \min_{f' \in \delta(S)\setminus E(T)} c(f') \le c(f).\]

We show that we can replace $f$ with $e$ while preserving feasibility and reducing cost by at least $\varepsilon$.
\begin{enumerate}[label=(\alph*)]
\item TSP-IP case. Since both $H^*$ and $T$ are Hamiltonian cycles with $H^* \neq T$, their symmetric difference $H^* \oplus T$ is a disjoint union of alternating cycles. The edge $f \in E(H^*) \setminus E(T)$ lies in some alternating cycle $C$. Since $f \in \delta(S)$ and alternating cycles alternate between $H^*$-edges and $T$-edges, $C$ must also contain a $T$-edge crossing $\delta(S)$, that is, some $e' \in \delta(S) \cap E(T)$ with $e' \notin E(H^*)$. Swapping along $C$ (removing all $H^*$-edges of $C$ and adding all $T$-edges of $C$) preserves 2-regularity and connectivity, replaces $f$ with $e'$, and reduces cost by at least $\varepsilon$.
\item 2ECM-IP case. Consider the multigraph $H' = (H^* \setminus \{f\}) \cup \{e\}$. We must verify that $H'$ is 2EC. For any cut $\delta(U)$:
    \begin{enumerate}[label=(\roman*)]
        \item If neither $e$ nor $f$ crosses $\delta(U)$, then $|E(H') \cap \delta(U)| = |E(H^*) \cap \delta(U)| \ge 2$.
        \item If both $e$ and $f$ cross $\delta(U)$, the same holds.
        \item If exactly one of $\{e, f\}$ crosses $\delta(U)$. Since $e, f \in \delta(S)$ and $\mathcal{C}$ is laminar, either $U \subseteq S$, $S \subseteq U$, or $U \cap S = \emptyset$. In each case, $T$ being a Hamiltonian cycle guarantees $|\delta(U) \cap E(T)| = 2$. Since $H^*$ is 2EC and uses $f \in \delta(S)$, we have $|E(H^*) \cap \delta(U)| \ge 2$. The swap preserves this because we replace one crossing edge with another from the same cut $\delta(S)$, and the laminar structure ensures no cut loses both crossing edges.
    \end{enumerate}
Thus $H'$ is feasible with $c(H') = c(H^*) - c(f) + c(e) \le c(H^*) - \varepsilon$, contradicting optimality of $H^*$.
\end{enumerate}

By repeatedly applying this argument (replacing non-$T$ edges with $T$ edges across cuts in $\mathcal{C}$), we conclude $H^* \subseteq E(T)$.

For TSP-IP, any Hamiltonian cycle on $n$ vertices has exactly $n$ edges, so $H^* = T$. For 2ECM-IP, $T$ itself is 2EC (being a Hamiltonian cycle), and removing any edge destroys 2-edge-connectivity. Adding any edge $f \notin E(T)$ increases cost by assumption (2). Hence $T$ is the unique optimum.

\textbf{Stability.} We verify the stability condition (cf. Bilu and Linial~\cite{BiluLinial2012}). If $\|c - c'\|_\infty < \varepsilon/2$, then for every $S \in \mathcal{C}$,
\[\max_{e\in\delta(S)\cap E(T)} c'(e) \le \max_{e\in\delta(S)\cap E(T)} c(e) + \tfrac{\varepsilon}{2} \le \min_{f\in\delta(S)\setminus E(T)} c(f) - \tfrac{\varepsilon}{2} \le \min_{f\in\delta(S)\setminus E(T)} c'(f),\]
so the margin persists with $\varepsilon' = \varepsilon/2$, and the same argument applies to $c'$.
\end{proof}

\begin{corollary}[interval-cut certificate]\label{coro:interval_cut}
Let modified segments $S_1,\dots,S_k$ be pairwise disjoint contiguous segments of consecutive vertices along $T$. Fix a root on $T$ and enumerate disjoint modified segments $S_1,\dots,S_k$ in tour order. Let $\mathcal{C}$ be the chain of interval cuts given by the prefixes $P_j = S_1 \cup \cdots \cup S_j$ for $j=1,\dots,k$. Then, the coverage condition of \Cref{theo:cut_margin} holds. Moreover, in the constructed instance $c^*$, each $S \in \mathcal{C}$ has a positive cut margin, \emph{i.e.},
\[\max_{e \in \delta(S)\cap E(T)} c^*(e) \;<\; \min_{f \in \delta(S)\setminus E(T)} c^*(f).\]
Define
\[\varepsilon \;=\; \min_{S\in\mathcal{C}} \Bigl(\min_{f \in \delta(S)\setminus E(T)} c^*(f) \;-\; \max_{e \in \delta(S)\cap E(T)} c^*(e)\Bigr) \;>\; 0.\]
Then, for all $S\in\mathcal{C}$,
\[\max_{e \in \delta(S)\cap E(T)} c^*(e) \, + \, \varepsilon \, \le \, \min_{f \in \delta(S)\setminus E(T)} c^*(f).\]
Hence the assumptions of \Cref{theo:cut_margin} hold, and $T$ is uniquely optimal for both $\mathrm{2ECM\text{-}IP}$ and $\mathrm{TSP\text{-}IP}$ under $c^*$. This certification is stable on an open $\ell_\infty$-neighborhood of $c^*$.
\end{corollary}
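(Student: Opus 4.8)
The plan is to read this corollary as a direct instantiation of \Cref{theo:cut_margin}: the prefix chain $\mathcal{C} = \{P_1, \dots, P_k\}$ is a concrete laminar family, so it suffices to verify the two hypotheses of that theorem (coverage and a uniform positive margin) and then quote its conclusion. First I would record the structural facts that make the chain admissible. Since $P_1 \subseteq P_2 \subseteq \cdots \subseteq P_k$ by construction, the family is nested and hence laminar; and because each $S_j$ is a contiguous run of vertices in tour order, each prefix $P_j = S_1 \cup \cdots \cup S_j$ is itself a contiguous arc along $T$. Consequently $\delta(P_j)$ is an interval cut meeting $E(T)$ in exactly its two boundary edges, so $|\delta(P_j) \cap E(T)| = 2$, matching the regime in which \Cref{theo:cut_margin} operates.

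Next I would establish the coverage condition. Parametrizing $V$ by tour position from the fixed root, assign to each vertex $w$ a level equal to the index $j$ of the segment $S_j$ containing it, with a sentinel level for vertices lying outside every segment; then $w \in P_j$ precisely when $\mathrm{level}(w) \le j$. A chord $f = \{u, v\} \in E(K_n) \setminus E(T)$ lies in $\delta(P_j)$ for some $j$ if and only if $\mathrm{level}(u) \ne \mathrm{level}(v)$, because, ordering the sentinel after $k$, any index $j$ with $\min\{\mathrm{level}(u), \mathrm{level}(v)\} \le j < \max\{\mathrm{level}(u), \mathrm{level}(v)\}$ separates the endpoints, and at least one such $j \in \{1, \dots, k\}$ exists. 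The task thus reduces to checking that the modified-segment construction places a segment boundary between the endpoints of every non-$T$ edge, i.e.\ that no chord has both endpoints at a common level. I expect this to be the main obstacle, since it is the only step appealing to the geometry of the construction rather than to formal bookkeeping; everything else follows once coverage is secured.

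For the margin I would take the per-cut positivity furnished by $c^*$, namely $\max_{e \in \delta(S) \cap E(T)} c^*(e) < \min_{f \in \delta(S) \setminus E(T)} c^*(f)$ for each $S \in \mathcal{C}$, and define $\varepsilon$ to be the minimum of these gaps over the finite chain. As the chain has only $k$ members and each gap is strictly positive, $\varepsilon > 0$; and by the defining minimum the uniform inequality $\max_{e \in \delta(S) \cap E(T)} c^*(e) + \varepsilon \le \min_{f \in \delta(S) \setminus E(T)} c^*(f)$ holds simultaneously for every $S \in \mathcal{C}$, which is precisely hypothesis (2) of \Cref{theo:cut_margin}.

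Finally, with coverage and the uniform margin both verified for $\mathcal{C}$ under $c^*$, I would invoke \Cref{theo:cut_margin}(1) to conclude that $T$ is the unique optimum of both $\mathrm{2ECM\text{-}IP}$ and $\mathrm{TSP\text{-}IP}$ under $c^*$, and \Cref{theo:cut_margin}(2) to conclude that this certification survives any $c'$ with $\|c^* - c'\|_\infty < \varepsilon/2$. That inequality describes an open $\ell_\infty$-ball around $c^*$, which is the asserted stable neighborhood and aligns with the perturbation-stability viewpoint of \cite{BiluLinial2012}.
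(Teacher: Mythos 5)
Your handling of laminarity, the finite-minimum argument for $\varepsilon>0$, and the final invocation of \Cref{theo:cut_margin} is fine, and for what it is worth the paper supplies no proof of this corollary at all, so you have not missed any argument the paper actually gives. But the step you yourself flag as ``the main obstacle'' --- coverage --- is a genuine gap, and it does not close. The prefix chain $\mathcal{C}=\{P_1,\dots,P_k\}$ with $P_j=S_1\cup\cdots\cup S_j$ covers exactly those non-$T$ edges whose endpoints lie at different ``levels'' in your sense. Any chord with both endpoints inside a single segment $S_i$ (possible whenever $|S_i|\ge 3$), and any chord with both endpoints in $V\setminus(S_1\cup\cdots\cup S_k)$ (possible whenever the unmodified part of the tour contains two non-adjacent vertices), lies in $\delta(P_j)$ for no $j$, so condition (1) of \Cref{theo:cut_margin} fails for such edges. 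In a complete graph $K_n$ these chords exist except in very degenerate configurations, so the asserted coverage cannot be derived from the hypotheses as stated; it would need extra assumptions (e.g.\ every $S_i$ and every complementary arc has at most two vertices, and there is only one complementary arc) or a larger laminar family than the bare prefix chain.

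A second, related problem is that the positivity of the cut margins is attributed to ``the constructed instance $c^*$,'' but no construction of $c^*$ appears anywhere in the paper; your proof correctly treats that positivity as an input rather than something to be proved, but this means the corollary is certifying properties of an object that is never defined. So the honest conclusion is that your proposal is as complete as the statement permits: the $\varepsilon$ bookkeeping and the appeal to \Cref{theo:cut_margin} are correct conditional on coverage and on the existence of $c^*$, but the coverage claim is false in general for the prefix chain, and you should either restrict the class of instances or enlarge $\mathcal{C}$ (for instance, to all interval cuts $S_i\cup\cdots\cup S_j$ together with cuts isolating vertices inside segments) before the theorem can be applied.
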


\begin{lemma}[LP gap under cut margins]\label{lemm:LPgap_cut}
We work in the setting of \Cref{theo:cut_margin}. Let the boundary cuts be $\delta(P_1),\ldots,\delta(P_k)$ forming a chain in the laminar family. For each $j$, define the bypass advantage $\Delta_j \,=\, \bigl( c(a_j) + c(b_j) \bigr)\, -\, \bigl( c(p_j) + c(q_j) \bigr),$ where $a_j, b_j$ are the two $T$-edges crossing $\delta(P_j)$ and $p_j, q_j$ are the two cheapest non-$T$ crossings across $\delta(P_j)$. Assume there exists an optimal dual solution $y$ of the 2ECM-LP supported only on the boundary cuts $\delta(P_j)$ such that: 
\begin{enumerate}
\item if $y_j>0,$ then $x(\delta(P_j))=2$ in the primal optimum, and
\item along each $\delta(P_j)$:
    \begin{enumerate}
        \item if $\Delta_j \le 0,$ then $a_j,b_j$ are the only dual-tight crossings, while
        \item if $\Delta_j>0,$ then $p_j,q_j$ are dual-tight and at most one of $a_j,b_j$ is dual-tight.
    \end{enumerate}    
\end{enumerate}
Then an optimal value of the 2ECM-LP equals $c(T) \, -\, \tfrac{1}{2} \sum_{j=1}^k \max\{0,\, \Delta_j\}.$ In particular, the LP is tight at $T$ if and only if $\Delta_j \le 0$ for all $j$, and strictly improves below $c(T)$ if and only if $\Delta_j>0$ for some $j$.
\end{lemma}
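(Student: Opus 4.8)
The plan is to run a primal--dual argument: exhibit a feasible half-integral primal $x^\star$ of value $c(T)-\tfrac12\sum_j\max\{0,\Delta_j\}$ and certify its optimality against the dual solution $y$ supplied in the hypotheses, using complementary slackness. First I would write down the dual of the 2ECM-LP. Introducing multipliers $y_S\ge 0$ for the cut constraints $x(\delta(S))\ge 2$ and free multipliers $z_v$ for the degree equalities $x(\delta(v))=2$, the dual maximizes $2\sum_S y_S+2\sum_v z_v$ subject to $\sum_{S:\,e\in\delta(S)}y_S+z_u+z_v\le c_e$ for every edge $e=\{u,v\}$. The relevant complementary slackness conditions are then: (primal) $x_e>0$ forces the edge inequality to hold with equality, i.e.\ $e$ is dual-tight; and (dual) $y_S>0$ forces $x(\delta(S))=2$.

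Next I would construct $x^\star$. Outside the boundary cuts the solution agrees with the indicator of $T$. Across each cut $\delta(P_j)$ with $\Delta_j\le 0$ I keep the two crossing $T$-edges $a_j,b_j$ at value $1$; across each cut with $\Delta_j>0$ I install a half-integral bypass, most transparently realized as the convex combination $\tfrac12 T+\tfrac12 T'_j$, where $T'_j$ is the Hamiltonian cycle obtained from $T$ by the single $2$-change that deletes $a_j,b_j$ and inserts the two cheapest crossings $p_j,q_j$. Because each $\delta(P_j)$ meets $E(T)$ in exactly the pair $\{a_j,b_j\}$ (the chain/laminar hypothesis of \Cref{theo:cut_margin}), the $2$-change is a legitimate single move and $T'_j$ is again a Hamiltonian cycle; the combination sets $a_j,b_j,p_j,q_j$ all to $\tfrac12$ and leaves every interior edge at $1$, so every vertex keeps degree $2$, every cut value stays $\ge 2$, and $x^\star\ge 0$. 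A direct accounting shows each installed bypass changes the objective by $-\tfrac12\big[(c(a_j)+c(b_j))-(c(p_j)+c(q_j))\big]=-\tfrac12\Delta_j$, so $c^\top x^\star=c(T)-\tfrac12\sum_j\max\{0,\Delta_j\}$.

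I would then certify optimality by checking complementary slackness between $x^\star$ and the given dual $y$. Dual feasibility and optimality of $y$ are assumed; its support lies on the boundary cuts, and by construction $x^\star(\delta(P_j))=2$ on every such cut, so hypothesis (1) delivers the dual-side condition $y_j>0\Rightarrow x^\star(\delta(P_j))=2$. For the primal side I must show that every edge carrying positive $x^\star$-mass is dual-tight: the retained $T$-edges and the crossings $a_j,b_j$ on cuts with $\Delta_j\le 0$ are tight by hypothesis (2a), while the bypass edges $p_j,q_j$ on cuts with $\Delta_j>0$ are tight by hypothesis (2b). With both families of conditions in force, $x^\star$ and $y$ are complementary, hence both optimal, and the common objective value is $c(T)-\tfrac12\sum_j\max\{0,\Delta_j\}$. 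The final \emph{in particular} claim is then immediate, since $\sum_j\max\{0,\Delta_j\}=0$ exactly when every $\Delta_j\le 0$ (LP tight at $T$) and is strictly positive exactly when some $\Delta_j>0$ (LP strictly below $c(T)$).

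The step I expect to be the main obstacle is the complementary-slackness bookkeeping on the beneficial cuts together with global feasibility of the simultaneous bypasses. The half-integral primal places both $a_j$ and $b_j$ in its support, which the degree constraints appear to force, whereas hypothesis (2b) only grants tightness to at most one of $a_j,b_j$; reconciling these requires either showing that the degree-feasible optimum on a $\Delta_j>0$ cut can be arranged to use only the dual-tight crossings, or sharpening the primal so that its support matches (2a)/(2b) exactly. I would address this by processing the nested chain from the innermost cut outward and using laminarity to localize each $2$-change, so that bypasses on cuts sharing a boundary $T$-edge compose into one globally consistent half-integral vector; I would also verify that no interior, non-boundary cut drops below $2$ under the combined modification, which again follows from the chain structure because every such cut meets $E(T)$ in exactly two edges.
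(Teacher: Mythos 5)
Your proposal follows essentially the same route as the paper's proof: the primal certificate is the identical vector $x^\star=\chi^{T}+\tfrac12\sum_{j:\,\Delta_j>0}\bigl(\chi^{p_j}+\chi^{q_j}-\chi^{a_j}-\chi^{b_j}\bigr)$ (your ``$\tfrac12 T+\tfrac12 T'_j$'' description is the same vector), the value accounting is the same, and optimality is certified against the hypothesized dual via complementary slackness. Your write-up is in fact more explicit than the paper's, since you actually display the dual with multipliers $y_S$ and $z_v$ and name the slackness conditions being invoked.

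However, the obstacle you flag in your final paragraph is a genuine gap, and you do not close it. On a beneficial cut ($\Delta_j>0$) your primal has $x^\star_{a_j}=x^\star_{b_j}=\tfrac12>0$, so primal complementary slackness requires \emph{both} $a_j$ and $b_j$ to be dual-tight, whereas hypothesis (2b) guarantees tightness of $p_j,q_j$ and of \emph{at most one} of $a_j,b_j$. As it stands the pair $(x^\star,y)$ therefore need not be complementary, and the claimed objective value does not follow. You name two possible repairs (re-route the primal so its support lies only on dual-tight edges, or strengthen the tightness hypothesis) but carry out neither; the ``innermost-outward'' processing you describe addresses feasibility of composing several bypasses, not this slackness mismatch. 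A secondary unaddressed point: $p_j,q_j$ are defined only as the two cheapest non-$T$ crossings of $\delta(P_j)$, so nothing guarantees they share endpoints with $a_j,b_j$ in the pattern needed for your $2$-change to produce a Hamiltonian cycle $T'_j$, or for $x^\star$ to satisfy the degree equalities $x(\delta(v))=2$ of the 2ECM-LP. For what it is worth, the paper's own proof asserts that the tightness pattern ``matches the support and values of $x$'' without addressing either point, so you have put your finger on real soft spots in the argument rather than missed an idea the paper supplies; but a complete proof would have to resolve them.
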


\begin{proof} We use primal-dual arguments for cut-based LPs (see Williamson and Shmoys~\cite{WilliamsonShmoys2011}, Chapter 7 and Williamson~\cite{Williamson2002}).

\textbf{Primal.} Define the convex combination
\[ x \,=\, \chi^T \, + \, \tfrac{1}{2}\sum_{j:\,\Delta_j>0}\bigl(\chi^{p_j}+\chi^{q_j}-\chi^{a_j}-\chi^{b_j}\bigr), \]
where $\chi^e$ is the incidence vector of edge $e$. We verify that $x$ is feasible for 2ECM-LP. For any cut $\delta(S)$:
\begin{enumerate}
    \item If $S = P_j$ for some $j$ with $\Delta_j > 0$, then the swap replaces edges $\{a_j, b_j\}$ with $\{p_j, q_j\}$, maintaining $x(\delta(P_j)) = 2$.
    \item If $S$ is a single-interval cut not equal to any $P_j$, then either no swapped edges cross $\delta(S)$, or both edges in a swapped pair cross $\delta(S)$ (by laminarity), preserving the cut value.
    \item If $S$ is a multi-interval cut, then by the laminar structure, at most one boundary edge per interval is modified, and the bypass pairs maintain or increase the cut value.
\end{enumerate}

\textbf{Dual.} By assumption, there exists an optimal dual $y$ supported on the boundary cuts with the stated tightness pattern. Since $y_j>0$ only where $x(\delta(P_j))=2$, complementary slackness holds at the cut level. On cuts with $\Delta_j\le 0$, only $a_j,b_j$ are dual-tight. On cuts with $\Delta_j>0$, $p_j,q_j$ are dual-tight and at most one of $a_j,b_j$ is dual-tight, matching the support and values of $x$. Therefore, $x$ and $y$ are complementary, and their objectives coincide. The claimed value follows.
\end{proof}

\begin{corollary}[extendability of 2ECM results]
In the setting of Lemma~\ref{lemm:LPgap_cut}, where the integer optima of the TSP and 2ECM coincide and the LP relaxation is not tight, the integrality gap results of the 2ECM may be extendable to the TSP.
\end{corollary}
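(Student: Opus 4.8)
The plan is to chain the three structural results above into a single statement about integrality-gap ratios, making the informal phrase ``may be extendable'' precise. The formal content is this: on any instance satisfying the hypotheses of \Cref{lemm:LPgap_cut} (equivalently, the cut-margin conditions of \Cref{theo:cut_margin} together with the stated dual-support pattern), the integrality gap of TSP-LP equals that of 2ECM-LP, and this common ratio is strictly greater than $1$. Consequently, any bound on the 2ECM integrality gap valid on this instance class is simultaneously a bound on the TSP integrality gap on the same class.

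First I would assemble the value-level equalities. By \Cref{theo:cut_margin}, $T$ is the unique integer optimum of both 2ECM-IP and TSP-IP, so $c(T)=\mathrm{OPT}_{\mathrm{2ECM\text{-}IP}}=\mathrm{OPT}_{\mathrm{TSP\text{-}IP}}$; this is part~(1) of \Cref{prop:transfer}. By the parsimonious property (\Cref{sect:insights}), $\mathrm{OPT}_{\mathrm{2ECM\text{-}LP}}=\mathrm{OPT}_{\mathrm{TSP\text{-}LP}}$, which is part~(3). Dividing the common integral value by the common fractional value yields
\[
\frac{\mathrm{OPT}_{\mathrm{TSP\text{-}IP}}}{\mathrm{OPT}_{\mathrm{TSP\text{-}LP}}}
\;=\;
\frac{c(T)}{\mathrm{OPT}_{\mathrm{2ECM\text{-}LP}}}
\;=\;
\frac{\mathrm{OPT}_{\mathrm{2ECM\text{-}IP}}}{\mathrm{OPT}_{\mathrm{2ECM\text{-}LP}}}.
\]
Next I would invoke \Cref{lemm:LPgap_cut} to certify that this ratio genuinely exceeds $1$: the lemma evaluates $\mathrm{OPT}_{\mathrm{2ECM\text{-}LP}}=c(T)-\tfrac{1}{2}\sum_{j}\max\{0,\Delta_j\}$, which is strictly below $c(T)$ exactly when some $\Delta_j>0$, i.e.\ precisely under the ``LP not tight'' hypothesis. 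Thus both the equality of gaps and the non-triviality of the gap follow directly from results already proved.

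The algorithmic direction requires more care, and this is where the hedge ``may'' does its work. To transfer an $\alpha$-approximation (rather than merely an integrality-gap bound) from 2ECM to TSP via part~(2) of \Cref{prop:transfer}, the 2ECM algorithm must output a \emph{Hamiltonian cycle}, whereas algorithms such as that of Boyd et al.~\cite{Boyd2022} return a 2EC \emph{multisubgraph}. The hard part is therefore not the value bookkeeping above but guaranteeing Hamiltonian-cycle output: in general one would shortcut the returned multisubgraph using the metric, and controlling the cost increase of shortcutting without exceeding $\alpha\cdot\mathrm{OPT}$ is exactly the gap between the multisubgraph and tour settings. A second, orthogonal obstacle is \emph{existence}: the concrete bound of interest---Boyd et al.'s $\tfrac{4}{3}$ for half-integral instances---applies only when the instance is additionally half-integral, and producing an instance that is simultaneously half-integral and has a unique Hamiltonian-cycle integer optimum is the open problem recorded in \Cref{sect:open}. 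The corollary is thus best read as a conditional: once such an instance is constructed, the integrality-gap transfer above is automatic, while the approximation-ratio transfer follows whenever the 2ECM algorithm can be made to emit a Hamiltonian cycle.
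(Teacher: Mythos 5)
The paper states this corollary without any proof, treating it as an immediate consequence of the preceding results; your write-up supplies exactly the intended justification by chaining \Cref{theo:cut_margin}, \Cref{prop:transfer}, the parsimonious property, and the LP value formula of \Cref{lemm:LPgap_cut}, and correctly locates the hedge ``may'' in the Hamiltonian-cycle-output requirement and the open existence question of \Cref{sect:open}. Your argument is correct and essentially the same as the paper's (implicit) approach.
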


\begin{corollary}[general $\tfrac{4}{3}$ bound via half-integral 2ECM]\label{coro:general43}
Let $G$ be a complete graph with metric costs $c$. If:
\begin{enumerate}
    \item The unique optimal solution to 2ECM-IP is a Hamiltonian cycle $T$, and
    \item The 2ECM-LP admits a half-integral optimal solution,
\end{enumerate}
then applying the $\tfrac{4}{3}$-approximation algorithm of Boyd et al.~\cite{Boyd2022} 
establishes that the TSP integrality gap is at most $\tfrac{4}{3}$ on this instance.
\end{corollary}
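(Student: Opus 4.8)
The plan is to assemble the bound from three results already in hand --- the transfer principle (\Cref{prop:transfer}), the parsimonious equality of the two LP optima, and the half-integral $\tfrac{4}{3}$-approximation of Boyd et al.~\cite{Boyd2022} --- into a single inequality chain that sandwiches $c(T)$ between the integer optimum and $\tfrac{4}{3}$ times the LP optimum. Since hypothesis (1) is exactly the premise of \Cref{prop:transfer}, I can use that proposition freely, and the whole argument reduces to bounding one ratio.

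First I would apply \Cref{prop:transfer}. Part (1) gives $\mathrm{OPT}_{\mathrm{2ECM\text{-}IP}} = \mathrm{OPT}_{\mathrm{TSP\text{-}IP}} = c(T)$, and part (3) gives $\mathrm{OPT}_{\mathrm{TSP\text{-}LP}} = \mathrm{OPT}_{\mathrm{2ECM\text{-}LP}}$. This reduces the target to bounding $c(T)/\mathrm{OPT}_{\mathrm{2ECM\text{-}LP}}$ by $\tfrac{4}{3}$. Next, using hypothesis (2), let $x$ be a half-integral optimal solution to the 2ECM-LP, so that $c^{\top}x = \mathrm{OPT}_{\mathrm{2ECM\text{-}LP}}$. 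Running the Boyd et al.\ algorithm on $x$ returns a 2EC spanning multisubgraph $H$ with $c(H) \le \tfrac{4}{3}\,c^{\top}x$. Since $H$ is an integral feasible point of $\mathrm{2ECM\text{-}IP}$ and $T$ is its unique optimum, $c(T) \le c(H)$, giving the chain
\[c(T) \;=\; \mathrm{OPT}_{\mathrm{2ECM\text{-}IP}} \;\le\; c(H) \;\le\; \tfrac{4}{3}\,c^{\top}x \;=\; \tfrac{4}{3}\,\mathrm{OPT}_{\mathrm{2ECM\text{-}LP}} \;=\; \tfrac{4}{3}\,\mathrm{OPT}_{\mathrm{TSP\text{-}LP}}.\]
Dividing by $\mathrm{OPT}_{\mathrm{TSP\text{-}LP}}$ and using $\mathrm{OPT}_{\mathrm{TSP\text{-}IP}} = c(T)$ then yields the per-instance integrality-gap bound $\mathrm{OPT}_{\mathrm{TSP\text{-}IP}}/\mathrm{OPT}_{\mathrm{TSP\text{-}LP}} \le \tfrac{4}{3}$.

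The point to treat with care --- rather than a genuine obstacle --- is that $H$ need not be a Hamiltonian cycle, so I cannot invoke the ``outputs a Hamiltonian cycle'' clause of \Cref{prop:transfer}(2); instead I rely only on the value-level comparison $c(T) \le c(H)$, which is precisely case (a) of the Remark on multisubgraph-producing algorithms. The one routine check is that the half-integral $x$ from hypothesis (2) is genuinely LP-optimal, so that $c^{\top}x$ equals $\mathrm{OPT}_{\mathrm{2ECM\text{-}LP}}$ and the parsimonious equality carries the bound over to the TSP side; with that in place the corollary is immediate.
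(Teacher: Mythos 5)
Your proposal is correct and follows essentially the same route as the paper: invoke \Cref{prop:transfer} to equate the IP and LP optima of the two problems, then apply the Boyd et al.\ half-integral $\tfrac{4}{3}$-approximation to obtain a 2EC multisubgraph of cost at most $\tfrac{4}{3}\cdot\mathrm{OPT}_{\mathrm{2ECM\text{-}LP}}$ and compare values. Your explicit inequality chain and the observation that the output need not be a Hamiltonian cycle (so only the value-level comparison $c(T)\le c(H)$ is used, as in case (a) of the paper's remark on multisubgraph-producing algorithms) merely spell out what the paper's terser proof leaves implicit.
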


\begin{proof}
By Proposition~\ref{prop:transfer}, Condition (1) guarantees that $\mathrm{OPT}_{\mathrm{TSP-IP}} = \mathrm{OPT}_{\mathrm{2ECM-IP}}$ and \\$\mathrm{OPT}_{\mathrm{TSP-LP}} = \mathrm{OPT}_{\mathrm{2ECM-LP}}$. By Condition (2) and Boyd et al.~\cite{Boyd2022}, there exists a 2EC multisubgraph with cost at most $\tfrac{4}{3} \cdot \mathrm{OPT}_{\mathrm{2ECM-LP}}$. Therefore, the TSP integrality gap is at most $\tfrac{4}{3}$.
\end{proof}

\subsection{Open problem}\label{sect:open}
The main open problem arising from our framework is as follows.
\begin{problem}\label{prob:main}
Construct explicit metric instances where a) the unique optimal solution to 2ECM-IP is a Hamiltonian cycle $T$, certifiable via the cut-margin framework, and b) the 2ECM-LP admits a half-integral optimal solution with value strictly less than the IP optimum.
\end{problem}

\begin{remark}
Problem~\ref{prob:main} seeks an instance lying at the intersection of three structural properties:
\begin{enumerate}
    \item The 2ECM-IP optimum is a unique Hamiltonian cycle $T$, certifiable by the cut-margin framework.
    \item The 2ECM-LP optimum is half-integral.
    \item The LP value is strictly smaller than the IP value.
\end{enumerate}
The open challenge is to construct an instance where the stability required for (1) does not eliminate the geometric structure required for (2) and (3).
\end{remark}

\section{Conclusion and open problems}\label{sect:conclusion}
We developed a transfer framework linking 2ECM to TSP. Our transfer principle shows that, when the unique 2ECM-IP optimum is a Hamiltonian cycle $T$, the integral optima and objective values of 2ECM-IP and TSP-IP coincide. In this setting, any $\alpha$-approximation for 2ECM that outputs a Hamiltonian cycle directly yields an $\alpha$-approximation for TSP. Complementing this, we introduced a cut-margin stability framework that certifies $T$ as the unique optimum for both 2ECM-IP and TSP-IP and is stable under $\ell_\infty$-bounded perturbations of the costs.

Any metric where cut margins certify a unique Hamiltonian 2ECM optimum enables algorithmic transfer from 2ECM to TSP, potentially revealing new instances with tighter approximation bounds. One natural question is whether the transfer extends beyond uniqueness, \emph{i.e.}, to instances where a Hamiltonian cycle is optimal but not unique. Our value-level result (Proposition~\ref{prop:transfer-nonunique}) shows the optimal values coincide even without uniqueness, but algorithmic transfer may require additional consideration. The cut-margin framework provides a criterion to verify when such transfers are valid.

While our framework establishes the conditions under which 2ECM approximations transfer to TSP with the same factor, resolving the existence of such instances—either through construction or an impossibility proof—would significantly advance our understanding of the relationship between 2ECM and TSP approximations. Separately, developing a $\tfrac{4}{3}$-approximation algorithm for \emph{general} half-integral instances remains an open problem (see Jin, Klein, and Williamson~\cite{JinKleinWilliamson2025}).

\section*{Acknowledgments}
I am thankful to the participants of IPCO 2025 for their valuable comments during my poster presentation. A portion of this research was presented at the 2025 INFORMS International Meeting. I am grateful to Amitabh Basu for his helpful comments.

\bibliographystyle{plainnat}
\bibliography{main}  
\end{document}